\newcommand{\Z}{\mathbb{Z}}
\newcommand{\R}{\mathbb{R}}
\newcommand{\C}{\mathbb{C}}
\def\H{\mathbb{H}}
\newcommand{\leg}[2]{\genfrac{(}{)}{}{}{#1}{#2}}
\newtheorem{theorem}{Theorem}[section]
\newtheorem{lemma}[theorem]{Lemma}
\newtheorem{corollary}[theorem]{Corollary}
\newtheorem*{theorem*}{Theorem}
\numberwithin{equation}{section}
\title{Overpartitions and class numbers of binary quadratic forms}
\date{\today}
\author{Kathrin Bringmann and Jeremy Lovejoy}
\address{School of Mathematics\\University of Minnesota\\ Minneapolis, MN 55455 \\U.S.A.}
\email{bringman@math.umn.edu}
\address{CNRS, LIAFA, Universit\'e Denis Diderot,
2, Place Jussieu, Case 7014, F-75251 Paris Cedex 05, FRANCE}
\email{lovejoy@liafa.jussieu.fr}
\begin{document}
\begin{abstract}
We show that the Zagier-Eisenstein series shares its
non-holomorphic part with certain weak Maass forms whose
holomorphic parts are generating functions for overpartition rank
differences.  This has a number of consequences, including exact
formulas, asymptotics, and congruences for the rank differences as
well as $q$-series identities of the mock theta type.
\end{abstract}

\maketitle

\section{Introduction and statement of results}
In this paper we relate overpartition analogues of Ramanujan's
mock theta function $f(q)$ to the generating function for Hurwitz
class numbers $H(n)$ of binary quadratic forms of discriminant
$-n$.  The generating function for $H(n)$ is the holomorphic part
of the Zagier-Eisenstein series $\mathcal{F}(z)$ \cite{HZ,Za1},
where
\begin{equation}
\mathcal{F}(z) :=- \frac{1}{12} + \sum_{n \geq 1 \atop n \equiv
0,3 \pmod{4}} H(n)q^n +
\frac{(1+i)}{16\pi} \int_{-\overline{z}}^{i \infty}
\frac{\Theta(\tau)}{(z+\tau)^{\frac32}}\,d\tau.
\end{equation}
Here and throughout $q=e^{2 \pi iz}$ and $z=x+iy$.
The series $\mathcal{F}(z)$ transforms like a weight $\frac32$
modular form on $\Gamma_0(4)$, but it is non-holomorphic. It is
the original example of a class of functions now called \emph{weak
Maass forms} \cite{Br-Fu1} (see Section \ref{ProofSection} for the
their definition).

Building on an idea of Zwegers \cite{Zw}, the first author and Ono
\cite{Br-On1,Br-On2} have recently constructed infinite families
of weak Maass forms arising from the generating function for
Dyson's rank. Recall that Dyson \cite{Dy} defined the
\textit{rank} of a partition to be the largest part minus the
number of parts.  A special case of the results in \cite{Br-On1}
says that Ramanujan's mock theta function
\begin{equation*}
\begin{split}
f(q)=1+\sum_{n=1}^{\infty}\alpha(n)q^n:&=1+\sum_{n=1}^{\infty}\frac{q^{n^2}}{(1+q)^2(1+q^2)^2\cdots (1+q^n)^2}\\
&=1+q-2q^2+3q^3-3q^4+3q^5-5q^6+\cdots,
\end{split}
\end{equation*}
which counts the number partitions with even rank minus the number
of partitions with odd rank, is the holomorphic part of a weight
$\frac12$ weak Maass form.

Viewing this function in the framework of weak Maass forms has led
to many applications, including an exact formula for the
coefficients of $f(q)$ \cite{Br-On1}, asymptotics for the number
of partitions of $n$ with fixed rank $m$ \cite{Br}, and identities
of for rank differences \cite{BOR}.  For example, in \cite{Br-On1}
the first author and Ono proved an exact formula for $\alpha(n)$
conjectured by Andrews and Dragonette \cite{An,Dr},
\begin{equation}\label{conj}
\alpha(n)=\frac{\pi}{(24n-1)^{\frac{1}{4}}} \sum_{k=1}^{\infty} \frac{
(-1)^{\lfloor
\frac{k+1}{2}\rfloor}A_{2k}\left(n-\frac{k(1+(-1)^k)}{4}\right)}{k}
\cdot I_{\frac{1}{2}}\left(\frac{\pi \sqrt{24n-1}}{12k}\right).
\end{equation}
Here $A_{2k}(n)$ denotes a Kloosterman-type sum
and $I_{\frac12}(x)$ is the usual Bessel function of order $\frac12$.

In this paper we consider two analogues of Ramanujan's mock theta
function $f(q)$ in the setting of overpartitions.  For the
coefficients of these two series we give exact formulas which are
of a completely different nature than (\ref{conj}).  Namely, we
will exhibit formulas in terms of the Hurwitz class number $H(n)$.


Recall that an \textit{overpartition} of $n$ is a partition in
which the first occurrence of a number may be overlined.  For
example, the $14$ overpartitions of $4$ are
\begin{center}
$4$, $\overline{4}$, $3+1$, $\overline{3} + 1$, $3 +
\overline{1}$, $\overline{3} + \overline{1}$, $2+2$, $\overline{2}
+ 2$, $2+1+1$, $\overline{2} + 1 + 1$, $2+ \overline{1} + 1$,
$\overline{2} + \overline{1} + 1$, $1+1+1+1$, $\overline{1} + 1 +
1 +1$.
\end{center}
Next recall the $M2$-rank, which was introduced by the second
author \cite{Lo2} based on the work in \cite{Be-Ga1}. To define
it, we use the notation $\ell(\cdot)$ to denote the largest part
of an object, $n(\cdot)$ to denote the number of parts, and
$\lambda_o$ for the subpartition of an overpartition consisting of
the odd non-overlined parts.  Then the $M2$-rank of an
overpartition $\lambda$ is
$$
\text{$M2$-rank}(\lambda) := \bigg \lceil \frac{\ell(\lambda)}{2}
\bigg \rceil - n(\lambda) + n(\lambda_o) - \chi(\lambda),
$$
where $\chi(\lambda) = 1$ if the largest part of $\lambda$ is odd
and non-overlined and $\chi(\lambda) = 0$ otherwise.  For example,
the $M2$-rank of the overpartition
$5+\overline{4}+4+\overline{3}+1+1$ is $3-6+3-1 = -1$.

Now let $\overline{p}_e(n)$ (resp. $\overline{p}_o(n)$, $M2_e(n)$,
$M2_o(n)$) denote the number of overpartitions of $n$ with even
rank (resp. odd rank, even $M2$-rank, odd $M2$-rank). For instance
we have $\overline{p}_e(4) = 2$, $\overline{p}_o(4) = 12$,
$M2_e(4) = 6$, and $M2_o(4) = 8$. We shall be concerned with the
rank differences
\begin{eqnarray*}
\overline{\alpha}(n)&:=&\overline{p}_e(n) -
\overline{p}_o(n),\\
\overline{\alpha}_2(n)&:= & M2_e(n) - M2_o(n),
\end{eqnarray*}
whose generating functions are
\begin{eqnarray*}
\overline{f}(q) &:= & \sum_{n =0}^{\infty} \overline{\alpha}(n) \,
q^n = 1+2q-4q^2+8q^3-10q^4+8q^5-8q^6 +
\cdots,  \\
\overline{f}_2(q) &:=& \sum_{n =0}^{\infty}
\overline{\alpha}_2(n) \, q^n =
1+2q+4q^2-2q^4+8q^5+8q^6 + \cdots.
\end{eqnarray*}

In  \cite{Br-Lo1} the authors showed that $\overline{f}(q)$ is the
holomorphic part of a weak Maass form of weight $\frac32$ (note
the different weight from the partition case). Similar arguments
will be used to show that $\overline{f}_2(q)$ is also the
holomorphic part of a weight $\frac32$ weak Maass form.  Then we
shall see that the non-holomorphic parts of $\overline{f}(q)$ and
$\overline{f}_2(q)$ essentially match that of $\mathcal{F}(z)$. It
turns out that the modular forms resulting from cancelling these
non-holomorphic parts can also be written in terms of $H(n)$,
yielding exact formulas for $\overline{\alpha}(n)$ and
$\overline{\alpha}_2(n)$.

We use $\mathcal{H}(q)$ to denote the generating function for the
Hurwitz class numbers (i.e. the holomorphic part of
$\mathcal{F}(z))$:
\begin{equation*}
\mathcal{H}(q) := - \frac{1}{12} + \sum_{n \geq 1 \atop n \equiv
0,3 \pmod{4}}H(n)q^n =- \frac{1}{12} + \frac{1}{3}q^3 +
\frac{1}{2}q^4 + q^7 + q^8 + q^{11} + \cdots.
\end{equation*}
Moreover we let $\Theta(z):=\sum_{n=-\infty}^{\infty} q^{n^2}$ be the classical theta function.
\begin{theorem} \label{main} We have
\begin{itemize}
\item[$(i)$]
\begin{equation*} \label{maineq1}
\overline{f}(-q) = -16\mathcal{H}(q) - \frac13 \Theta^3(z),
\end{equation*}
\item[$(ii)$]
\begin{equation*} \label{maineq2}
\overline{f}_2(q) = - 8\mathcal{H}(q) + \frac13 \Theta^3(z).
\end{equation*}
\end{itemize}
\end{theorem}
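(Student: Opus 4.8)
The strategy is to identify all three objects on each side as holomorphic parts of weak Maass forms (or honest modular forms) of weight $\tfrac32$ on $\Gamma_0(4)$, show that the two sides have the same non-holomorphic completion, and then pin down the finite-dimensional difference by checking finitely many coefficients.

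First, for part $(i)$: by the result of \cite{Br-Lo1} quoted in the introduction, $\overline{f}(q)$ is the holomorphic part of a weak Maass form $\widehat{\overline{f}}(z)$ of weight $\tfrac32$; I would write out its non-holomorphic part explicitly as a period integral of a unary theta series, exactly as in the construction there. The function $\mathcal{F}(z)$ has non-holomorphic part $\tfrac{1+i}{16\pi}\int_{-\overline z}^{i\infty}\Theta(\tau)(z+\tau)^{-3/2}\,d\tau$. The key computation is to match these two non-holomorphic parts after the substitution $q\mapsto -q$ (equivalently $z\mapsto z+\tfrac12$): one checks that the shadow (the image under $\xi_{3/2}$) of $\widehat{\overline{f}}$ evaluated at $z+\tfrac12$ equals $-16$ times the shadow of $\mathcal F$ up to the theta contribution, so that
\[
\widehat{\overline{f}}\!\left(z+\tfrac12\right) + 16\,\mathcal F(z)
\]
is a \emph{holomorphic} modular form of weight $\tfrac32$ on $\Gamma_0(4)$ (possibly with a multiplier/shift that must be tracked carefully). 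The space of such forms is spanned by $\Theta^3(z)$ (and its images under Atkin--Lehner/$U$-$V$ operators), by Cohen--Eisenstein theory and the classical fact that weight-$\tfrac32$ holomorphic forms on $\Gamma_0(4)$ of this level are one-dimensional; so the difference must be a constant multiple of $\Theta^3(z)$. Comparing the constant terms and the coefficient of $q^1$ fixes that multiple as $-\tfrac13$, giving $(i)$. For part $(ii)$ the argument is the same: first establish that $\overline{f}_2(q)$ is the holomorphic part of a weight-$\tfrac32$ weak Maass form (the introduction promises ``similar arguments''; concretely, one takes the Hecke-type or Appell--Lerch series representation of $\overline f_2$ coming from \cite{Lo2,Be-Ga1} and applies Zwegers-style completion), compute its shadow, observe it is $-\tfrac12$ times that of $\mathcal F$ (hence the coefficient $-8$), conclude $\overline f_2(q)+8\mathcal H(q)$ extends to a holomorphic weight-$\tfrac32$ form, and match finitely many coefficients to identify it as $\tfrac13\Theta^3(z)$.

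The main obstacle, I expect, is the bookkeeping in matching the non-holomorphic parts: getting the exact normalizing constants ($-16$ and $-8$), handling the $q\mapsto -q$ shift and its effect on the theta kernel and on the level/multiplier, and verifying that the completed object is modular on the correct group $\Gamma_0(4)$ rather than a larger-level congruence subgroup. This requires knowing a clean closed form — an Appell--Lerch / Hecke-type double sum — for $\overline f$ and $\overline f_2$ so that Zwegers' $\mu$-function machinery applies and the shadow can be read off; deriving or citing those representations is the technical heart. Once the non-holomorphic parts provably cancel, the rest is soft: the relevant space of holomorphic modular forms is finite-dimensional (indeed essentially spanned by $\Theta^3$), so a finite coefficient check — which the stated $q$-expansions of $\overline f$, $\overline f_2$, and $\mathcal H$ already make available — completes the proof. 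A secondary point to be careful about is convergence/analytic continuation of the period integrals near the real line, which is standard but should be noted.
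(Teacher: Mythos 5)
Your overall strategy is the same as the paper's: complete $\overline{f}$ and $\overline{f}_2$ to weight-$\frac32$ weak Maass forms, observe that their non-holomorphic parts agree with that of $-16\mathcal{F}(z)$ (resp.\ $-8\mathcal{F}(z)$) after the shift $z\mapsto z+\frac12$ in the first case, and identify the resulting modular form by a finite coefficient check. There is, however, one genuine gap in the way you close the argument. Cancelling the non-holomorphic parts only tells you that $\overline{f}(-q)+16\mathcal{H}(q)$ is a \emph{weakly holomorphic} modular form: weak Maass forms are permitted linear exponential growth at the cusps, so the holomorphic part may a priori have a principal part there --- exactly as happens for Ramanujan's $f(q)$, whose coefficients grow exponentially even though its completion is a weak Maass form. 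Your step that the difference lies in the one-dimensional space spanned by $\Theta^3$ silently assumes holomorphy at every cusp; without that, the relevant space of weakly holomorphic forms is infinite-dimensional and no finite coefficient check can conclude.

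The paper supplies the missing ingredient explicitly: it multiplies $\overline{g}(z):=\overline{f}(-q)+16\mathcal{H}(q)$ by $\Theta(z)$ and shows that the coefficients of the product grow at most polynomially --- the Appell--Lerch piece $4\sum_{n\in\Z}(-1)^nq^{n^2+n}(1+(-q)^n)^{-2}$ coming from the known generating function for $\overline{f}$ has coefficients expressible in terms of divisor sums, and $H(n)$ grows polynomially --- and then invokes the standard fact that a weakly holomorphic modular form with polynomially bounded coefficients is holomorphic. You would need this step (or an equivalent direct analysis of the expansions at all cusps) to make your finite-dimensionality argument legitimate. A secondary point: the natural level here is $\Gamma_0(16)$, not $\Gamma_0(4)$, so the target space is not one-dimensional; the paper identifies $\overline{g}(z)\Theta(z)$ inside the weight-$2$ holomorphic forms on $\Gamma_0(16)$ by matching coefficients up to $q^4$, and checking only the constant term and the coefficient of $q$, as you propose, would not suffice at that level.
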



Now to express the coefficients $\overline{\alpha}(n)$ and
$\overline{\alpha}_2(n)$ in terms of class numbers, we recall that
Gauss proved that if we define $r(n)$ by
$$
\sum_{n \geq 0}r(n)q^n := \Theta^3(z) = 1 + 6q + 12q^2 + 8q^3 +
6q^4 + \cdots,
$$
then we have
\begin{equation} \label{rofn}
r(n) =
\begin{cases}
12H(4n)& \text{if $n \equiv 1,2 \pmod{4}$}, \\
24H(n)& \text{if $n \equiv 3 \pmod{8}$}, \\
r(n/4)& \text{if $n \equiv 0 \pmod{4}$}, \\
0& \text{if $n \equiv 7 \pmod{8}$}.
\end{cases}
\end{equation}
The following formulas are then immediate:
\begin{corollary} \label{formula1} We have
\begin{itemize}
\item[($i$)]
\begin{equation} \label{formula1eq}
(-1)^n\, \overline{\alpha}(n) =
\begin{cases}
-4H(4n) & \text{if }n \equiv 1,2 \pmod{4}, \\
-24H(n) & \text{if }n \equiv 3 \pmod{8}, \\
-16H(n)&\text{if } n \equiv 7\pmod{8}, \\
-16H(n) - \frac{1}{3}r(n/4)&\text{if } 4 \mid n.
\end{cases}
\end{equation}
\item[($ii$)]
\begin{equation} \label{formula2eq}
\overline{\alpha_2}(n) =
\begin{cases}
4H(4n), &\text{if } n \equiv 1,2 \pmod{4}\\
0&\text{if } n \equiv 3 \pmod{8}\\
-8H(n), &\text{if } n \equiv 7\pmod{8} \\
-8H(n) + \frac{1}{3}r(n/4)&\text{if } 4 \mid n.
\end{cases}
\end{equation}
\end{itemize}
\end{corollary}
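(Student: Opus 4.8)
The plan is to obtain both parts of the Corollary as a purely formal consequence of Theorem~\ref{main} by comparing coefficients of $q^n$ and eliminating $r(n)$ via Gauss's classical identity (\ref{rofn}). Write $\Theta^3(z)=\sum_{n\ge 0}r(n)q^n$, recall $\mathcal{H}(q)=-\tfrac{1}{12}+\sum_{n\ge 1,\ n\equiv 0,3\,(4)}H(n)q^n$, and note the sign twist $\overline{f}(-q)=\sum_{n\ge 0}(-1)^n\overline{\alpha}(n)q^n$. Introducing the abbreviation $\delta(n):=1$ when $n\equiv 0,3\pmod{4}$ and $\delta(n):=0$ otherwise, Theorem~\ref{main} gives, for every $n\ge 1$,
\[
(-1)^n\,\overline{\alpha}(n)=-16\,\delta(n)\,H(n)-\tfrac{1}{3}\,r(n),\qquad
\overline{\alpha}_2(n)=-8\,\delta(n)\,H(n)+\tfrac{1}{3}\,r(n).
\]

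Next I would substitute the four cases of (\ref{rofn}) into these two identities. For $n\equiv 1,2\pmod{4}$ one has $\delta(n)=0$ and $r(n)=12H(4n)$; for $n\equiv 3\pmod{8}$ one has $\delta(n)=1$ and $r(n)=24H(n)$; for $n\equiv 7\pmod{8}$ one has $\delta(n)=1$ and $r(n)=0$; and for $4\mid n$ one has $\delta(n)=1$ and $r(n)=r(n/4)$. These four classes exhaust all residues modulo $8$ — the class $n\equiv 3\pmod{4}$ being split as $n\equiv 3,7\pmod{8}$ precisely to match the piecewise definition of $r$ — so plugging each into the displayed identities reproduces the eight cases of (\ref{formula1eq}) and (\ref{formula2eq}). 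For instance, when $n\equiv 3\pmod{8}$ one finds $(-1)^n\overline{\alpha}(n)=-16H(n)-8H(n)=-24H(n)$ and $\overline{\alpha}_2(n)=-8H(n)+8H(n)=0$, while for $4\mid n$ one gets $-16H(n)-\tfrac{1}{3}r(n/4)$ and $-8H(n)+\tfrac{1}{3}r(n/4)$ respectively.

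I do not anticipate any genuine obstacle here: the Corollary follows from Theorem~\ref{main} and (\ref{rofn}) by formal coefficient comparison, and the only point requiring care is the bookkeeping over residue classes — keeping track of the substitution $q\mapsto -q$ in part~$(i)$ and splitting $n\equiv 3\pmod{4}$ into $n\equiv 3,7\pmod{8}$. One may additionally observe that both formulas remain valid at $n=0$ under the usual convention $H(0)=-\tfrac{1}{12}$, since then $4\mid n$ and $-16H(0)-\tfrac{1}{3}r(0)=\tfrac{4}{3}-\tfrac{1}{3}=1=\overline{\alpha}(0)$, and likewise $-8H(0)+\tfrac{1}{3}r(0)=1=\overline{\alpha}_2(0)$. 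The substantive work lies entirely in Theorem~\ref{main} itself, namely in matching the non-holomorphic parts of the weight-$\tfrac{3}{2}$ weak Maass forms attached to $\overline{f}$, $\overline{f}_2$ and $\mathcal{F}$ and then identifying the holomorphic modular forms that remain.
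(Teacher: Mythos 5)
Your proposal is correct and matches the paper exactly: the paper states the corollary is ``immediate'' from Theorem~\ref{main} together with Gauss's formula \eqref{rofn}, which is precisely the coefficient comparison you carry out. Your case-by-case bookkeeping (including the split of $n\equiv 3\pmod 4$ into $3,7\pmod 8$ and the sign twist from $q\mapsto -q$) checks out, and the $n=0$ verification is a nice sanity check.
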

\noindent {\it Five  remarks.}

\noindent 1) Theorem \ref{main} further emphasizes the role that
rank differences play in linking partitions to automorphic forms
which are not classical modular forms.  In addition to the mock
theta function $f(q)$ described earlier, we recall that the
generating function for the number of partitions into distinct
parts with even rank minus the number of partitions into distinct
parts with odd rank gives rise to a Maass waveform
\cite{An-Dy-Hi1,Co1}.

\noindent 2) The method of proof yielding exact formulas for
$\overline{\alpha}(n)$ and $\overline{\alpha}_2(n)$ is completely
different from the one used in \cite{Br-On1} to obtain
(\ref{conj}). In \cite{Br-On1} the authors use Maass Poincar\'e
series whereas here we employ relations between non-holomorphic
parts of weak Maass forms.


\noindent 3)  The above equations imply that
$\overline{\alpha}(n)$ and $\overline{\alpha}_2(n)$ only grow
polynomially (like $n^k$ where $1/2-\epsilon < k < 1/2+ \epsilon$,
to be precise), whereas the coefficients $\alpha(n)$ grow
exponentially.

 \noindent 4)
One obvious application of \eqref{formula1eq} and
\eqref{formula2eq} is to use facts about class numbers to learn
about rank differences.  From a combinatorial perspective, there
are many surprising consequences. Even before appealing to the
vast knowledge about class numbers, there are immediate relations,
such as
$$\overline{p}_e(4n+1) - \overline{p}_o(4n+1) = M2_e(4n+1) -
M2_o(4n+1),
$$
which seem rather unlikely  given the disparate
definitions of Dyson's rank and the $M2$-rank.

\noindent 5) It would be interesting to see whether combinatorial
properties of overpartitions could be employed to obtain
information about class numbers.  Is there some natural involution
on overpartitions which changes the parity of the rank (or
$M2$-rank) but which is not defined on a subset whose size clearly
corresponds to class numbers?

In Corollaries \ref{cor1} and \ref{cor2} below we give a brief
indication of what can be said about overpartitions by combining
class number formulas with \eqref{formula1eq} and
\eqref{formula2eq}. Corollary \ref{cor1} contains some exact
formulas at $p^{2k}$ and $2p^{2k}$, while Corollary \ref{cor2}
gives a couple of congruences in arithmetic progressions whose
members are highly divisible by powers of $4$. We state these for
$\overline{f}(q)$, but of course similar statements hold for
$\overline{f}_2(q)$, and of course these are just a few among
endless possibilities.

\begin{corollary} \label{cor1}
If $p$ is an odd prime, then we have
\begin{equation} \label{cor1part1}
\overline{p}_e(p^{2k}) - \overline{p}_o(p^{2k})  =
\begin{cases}
2p^k, & p \equiv 1 \pmod{4}, \\
\frac{2p^k(p+1) - 4}{p-1}, & p \equiv 3 \pmod{4},
\end{cases}
\end{equation}
\begin{equation} \label{cor1part2}
\overline{p}_e(2p^{2k}) - \overline{p}_o(2p^{2k})   =
\begin{cases}
-4p^k, & p \equiv 1,3 \pmod{8}, \\
\frac{-4p^k(p+1) + 8}{p-1}, & p \equiv 5,7 \pmod{8}.
\end{cases}
\end{equation}
\end{corollary}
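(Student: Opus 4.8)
The plan is to reduce everything to the formula \eqref{formula1eq} for $(-1)^n\overline{\alpha}(n)$, and then to feed in the classical evaluation of the Hurwitz class number at prime-power discriminants. Since $p$ is odd, both $p^{2k}$ and $2p^{2k}$ are congruent to $1$ or $2 \pmod 4$; in fact $p^{2k}\equiv 1\pmod 4$ always, and $2p^{2k}\equiv 2\pmod 4$. Hence $n$ is even in the first case and even in the second as well, so $(-1)^n=1$ throughout, and \eqref{formula1eq} gives
\[
\overline{p}_e(p^{2k}) - \overline{p}_o(p^{2k}) = -4H(4p^{2k}), \qquad
\overline{p}_e(2p^{2k}) - \overline{p}_o(2p^{2k}) = -4H(8p^{2k}).
\]
So the corollary is exactly equivalent to the two class-number identities $H(4p^{2k}) = -\tfrac12\cdot(\text{RHS of }\eqref{cor1part1})$ and $H(8p^{2k}) = -\tfrac12\cdot(\text{RHS of }\eqref{cor1part2})$.

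Next I would invoke the standard formula for $H(N)$ in terms of the Hurwitz class number of the fundamental discriminant dividing $N$. Writing $-4p^{2k} = D_0 f^2$ with $D_0$ the fundamental discriminant and $f$ the conductor, one has the classical relation (a consequence of the class number formula together with the behaviour of class numbers under the conductor, see e.g. Cohen's book)
\[
H(Df^2) = H(D)\sum_{d\mid f}\mu(d)\leg{D}{d}\sigma_1(f/d),
\]
valid for $D$ a (negative) fundamental discriminant. For $-4p^{2k}$ one has $D_0=-4$ and $f=p^k$ when $p\equiv 1\pmod 4$, while $D_0=-p$... — more carefully, one must split according to $p\bmod 4$: when $p\equiv 1\pmod 4$ the discriminant $-4p^{2k}$ has fundamental part $-4$ and conductor $p^k$; when $p\equiv 3\pmod 4$ it has fundamental part $-p$ (note $-p\equiv 1\pmod 4$) and conductor $2p^{k}$... here I would pause and do the factorisation of $-4p^{2k}$ and $-8p^{2k}$ into $D_0 f^2$ carefully in each residue class of $p$. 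Using $H(-4)=\tfrac12$, $H(-8)=1$ together with the class number one values $H(-p)$ for small $p$ is \emph{not} needed because the Legendre symbol $\leg{D_0}{p}$ is what controls the sum; the point is that since the conductor is a power of $p$ (times at most a bounded factor), the divisor sum telescopes geometrically, producing the closed forms $2p^k$, $(2p^k(p+1)-4)/(p-1)$, etc. Concretely, with $\chi=\leg{D_0}{\cdot}$ one gets $\sum_{d\mid p^k}\mu(d)\chi(d)\sigma_1(p^{k}/d) = \sigma_1(p^k) - \chi(p)\sigma_1(p^{k-1}) = (p^{k+1}-1)/(p-1) - \chi(p)(p^{k}-1)/(p-1)$, and substituting $\chi(p)=0,\pm1$ as appropriate gives the stated formulas after multiplying by $-4H(D_0)$.

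The main obstacle is bookkeeping rather than anything deep: one must get the fundamental-discriminant/conductor decomposition of $-4p^{2k}$ and $-8p^{2k}$ exactly right in each residue class $p\pmod 4$ (resp. $p\pmod 8$), keep track of the extra factor of $2$ in the conductor when $p\equiv 3\pmod 4$ (which introduces a $\sigma_1$-term at the prime $2$ and a value $\leg{D_0}{2}$ depending on $p\bmod 8$ — this is precisely why \eqref{cor1part2} splits into $p\equiv 1,3$ versus $p\equiv 5,7\pmod 8$), and then verify the elementary geometric-series simplifications. I would organise the write-up as: (1) reduce to class numbers via \eqref{formula1eq}; (2) state the conductor formula for $H(Df^2)$; (3) a short case analysis on $p\bmod 8$ computing $D_0$, $f$, $H(D_0)$, and the character values; (4) collapse the divisor sums. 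No step should require more than a few lines once the cases are set up.
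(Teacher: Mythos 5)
Your overall strategy is exactly the paper's: reduce to Hurwitz class numbers via \eqref{formula1eq} and then evaluate $H$ using the conductor formula \eqref{Cohenformula} (the paper cites Cohen for $H(n)=\frac{h(D)}{w(D)}\sum_{d\mid f}\mu(d)\leg{D}{d}\sigma_1(f/d)$, which is what you write). However, there are two concrete errors in the execution. First, the parity: $p^{2k}$ is \emph{odd} (it is a power of an odd prime), so $(-1)^n=-1$ in the first case, and \eqref{formula1eq} gives $\overline{p}_e(p^{2k})-\overline{p}_o(p^{2k})=+4H(4p^{2k})$, not $-4H(4p^{2k})$. With your sign you would obtain $-2p^k$ for $p\equiv 1\pmod 4$, contradicting the (correct) statement; only the second case, $n=2p^{2k}$, is even, where indeed $\overline{\alpha}(2p^{2k})=-4H(8p^{2k})$.

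Second, your tentative fundamental-discriminant decomposition for $p\equiv 3\pmod 4$ is inconsistent: if $D_0=-p$ and $f=2p^k$ then $D_0f^2=-4p^{2k+1}\neq -4p^{2k}$. Because the exponent of $p$ is even, the decomposition is the same in all residue classes: $-4p^{2k}=(-4)(p^k)^2$ and $-8p^{2k}=(-8)(p^k)^2$, with $h(-4)/w(-4)=\tfrac12$ and $h(-8)/w(-8)=1$. The case split in the corollary comes entirely from the character values $\leg{-4}{p}=\leg{-1}{p}$ and $\leg{-8}{p}=\leg{-2}{p}$ inside the divisor sum $\sigma_1(p^k)-\chi(p)\sigma_1(p^{k-1})$, which you do compute correctly; there is never an extra factor of $2$ in the conductor. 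With these two corrections ($+4H(4p^{2k})$, $-4H(8p^{2k})$, and $D_0\in\{-4,-8\}$ throughout) your calculation collapses to the stated formulas and coincides with the paper's proof.
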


\begin{corollary} \label{cor2}
We have the following congruences:
\begin{itemize}
\item[$(i)$] For any $a \geq 1$, if $M \mid 2^{a + 2} - 3$, then
for all $n \geq 0$ and $t=1$ or $2$ we have
\begin{equation*}
\overline{p}_e(4^{a+1}n + t4^{a}) \equiv \overline{p}_o(4^{a+1}n + t4^{a})  \pmod{M}.
\end{equation*}
\item[$(ii)$] For any $a \geq 1$, if $M \mid 2^{a+1}-1$, then for
all $n \geq 0$ we have
\begin{equation*}
\overline{p}_e(2\cdot4^{a+1}n + 3\cdot 4^{a}) \equiv \overline{p}_o(2\cdot4^{a+1}n + 3\cdot 4^{a})
\pmod{M}.
\end{equation*}
\end{itemize}
\end{corollary}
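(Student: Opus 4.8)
The plan is to apply the operator $U_4$ (acting by $\sum a_nq^n\,|\,U_4:=\sum a_{4n}q^n$) a total of $a$ times to the identity of Theorem~\ref{main}$(i)$. Beyond the results above, two ingredients are needed: the trivial fact that $\Theta^3(z)$ is fixed by $U_4$, which is precisely the relation $r(4n)=r(n)$ from \eqref{rofn}; and the classical Hurwitz class number identity $H(4n)=2H(n)+\tfrac{1}{12}r(n)$ for all $n\ge1$ (with the usual convention $H(n)=0$ when $n\equiv1,2\pmod4$), equivalently $\mathcal H(q)\,|\,U_4=2\,\mathcal H(q)+\tfrac{1}{12}\Theta^3(z)$. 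This last identity is classical; it may also be obtained from the modularity of $\mathcal F$ and $\Theta^3$ together with the behaviour of theta functions under $U_4$.

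Iterating the two relations gives, for every $a\ge0$,
\begin{equation*}
\Theta^3(z)\,|\,U_4^a=\Theta^3(z),\qquad \mathcal H(q)\,|\,U_4^a=2^a\,\mathcal H(q)+\frac{2^a-1}{12}\,\Theta^3(z),
\end{equation*}
each by a one-line induction on $a$ (here $U_4^a$ means $U_4$ applied $a$ times). Applying $U_4^a$ to $\overline{f}(-q)=-16\,\mathcal H(q)-\tfrac{1}{3}\Theta^3(z)$ and using that $(-1)^{4^an}=1$ for $a\ge1$, a comparison of Fourier coefficients yields, for $a\ge1$ and $n\ge1$,
\begin{equation*}
\overline{\alpha}(4^an)=-2^{a+4}\,H(n)-\frac{2^{a+2}-3}{3}\,r(n).
\end{equation*}
Every progression in the statement has the form $4^am_0$ with $a\ge1$ — namely $m_0=4n+t$, $t\in\{1,2\}$, in part $(i)$ and $m_0=8n+3$ in part $(ii)$ — so it remains only to specialise this formula.

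In part $(i)$ one has $m_0\equiv1,2\pmod4$, whence $H(m_0)=0$ and $\overline{\alpha}(4^am_0)=-\tfrac{2^{a+2}-3}{3}\,r(m_0)$. The first line of \eqref{formula1eq} (equivalently Theorem~\ref{main}$(i)$) gives $\tfrac{1}{3}r(m_0)=-(-1)^{m_0}\overline{\alpha}(m_0)$, so
\begin{equation*}
\overline{\alpha}\bigl(4^{a+1}n+t\,4^{a}\bigr)=(-1)^{t}\,(2^{a+2}-3)\,\overline{\alpha}(4n+t),
\end{equation*}
which is divisible by $2^{a+2}-3$, hence by every $M\mid 2^{a+2}-3$. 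In part $(ii)$ one has $m_0\equiv3\pmod8$, whence $r(m_0)=24H(m_0)$ and
\begin{equation*}
\overline{\alpha}(4^am_0)=-2^{a+4}H(m_0)-8\,(2^{a+2}-3)\,H(m_0)=-24\,(2^{a+1}-1)\,H(m_0).
\end{equation*}
The second line of \eqref{formula1eq} gives $24H(m_0)=\overline{\alpha}(m_0)$ (as $m_0$ is odd), so
\begin{equation*}
\overline{\alpha}\bigl(2\cdot 4^{a+1}n+3\cdot 4^{a}\bigr)=-(2^{a+1}-1)\,\overline{\alpha}(8n+3),
\end{equation*}
divisible by every $M\mid 2^{a+1}-1$. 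Since $\overline{\alpha}=\overline{p}_e-\overline{p}_o$, these are exactly the two asserted congruences.

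No step here is a real obstacle: everything is forced once the $U_4$-identity for $\mathcal H$ is granted. The only points demanding care are the bookkeeping of the constants $2^{a+2}-3$ and $2^{a+1}-1$ — which is precisely where the hypotheses $M\mid 2^{a+2}-3$ and $M\mid 2^{a+1}-1$ enter — and the observation that the a priori rational quantities $\tfrac{1}{3}r(m_0)$ and $H(m_0)$ reassemble, via \eqref{formula1eq}, into the manifestly integral $\overline{\alpha}(m_0)$, so that the displayed equalities are genuine divisibility statements for integers.
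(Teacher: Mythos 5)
Your proof is correct. The paper disposes of Corollaries \ref{cor1} and \ref{cor2} together as ``simple calculations'' from Corollary \ref{formula1}, equation \eqref{rofn}, and Cohen's closed formula \eqref{Cohenformula}; that is, it evaluates $H(4^a m_0)$ directly as a divisor sum over the conductor, whose $2$-part grows with $a$. You take a genuinely different, and arguably cleaner, route: you iterate the one-step duplication identity $H(4n)=2H(n)+\tfrac{1}{12}r(n)$ (equivalently $\mathcal{H}(q)\,|\,U_4=2\mathcal{H}(q)+\tfrac{1}{12}\Theta^3(z)$ --- note this is deducible from \eqref{Cohenformula} but \emph{not} from \eqref{rofn} alone, since \eqref{rofn} says nothing about $H(4n)$ for $n$ odd with $n\equiv 3\pmod 4$; so you do need to import or prove it) to obtain $\mathcal{H}(q)\,|\,U_4^a=2^a\mathcal{H}(q)+\tfrac{2^a-1}{12}\Theta^3(z)$ and hence $\overline{\alpha}(4^an)=-2^{a+4}H(n)-\tfrac{2^{a+2}-3}{3}\,r(n)$. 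This is consistent with Corollary \ref{formula1}: for $a=1$ it reproduces $-16H(4m)-\tfrac13 r(m)=-32H(m)-\tfrac53 r(m)$, and your final exact identities $\overline{\alpha}\bigl(4^a(4n+t)\bigr)=(-1)^t(2^{a+2}-3)\,\overline{\alpha}(4n+t)$ and $\overline{\alpha}\bigl(4^a(8n+3)\bigr)=-(2^{a+1}-1)\,\overline{\alpha}(8n+3)$ check numerically (e.g.\ $\overline{\alpha}(4)=-5\cdot 2=-10$ and $\overline{\alpha}(12)=-3\cdot 8=-24$, matching the expansion of $\overline{f}(q)$ and the class-number formula). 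What your organization buys is that the congruences, with their exact moduli $2^{a+2}-3$ and $2^{a+1}-1$, drop out of clean multiplicative recursions relating the rank differences along each progression back to the base values $\overline{\alpha}(4n+t)$ and $\overline{\alpha}(8n+3)$, rather than from case-by-case divisibility bookkeeping inside Cohen's divisor sum; the only extra input beyond the paper's is the classical duplication formula, and you correctly handle both the sign $(-1)^{4^a n}=1$ and the integrality of the a priori rational quantities involved.
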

For example, if we take $a = 4$ and $t=1$ in $(i)$ of Corollary,
then we obtain
$$
\overline{p}_e(1052n+256) \equiv \overline{p}_o(1052n+256) \pmod{61}.
$$

Another application is to relate overpartition rank differences to
other partition-theoretic functions which have connections to
class numbers.  To give one example, using work of Ono and Sze
\cite{On-Sz1} we may deduce the following:
\begin{corollary} \label{cor3}
Let $C_4(n)$ denote the number of partitions of $n$ which are
$4$-cores. If $8n+5$ is square-free, then we have
\begin{eqnarray} \label{cor3eq}
8C_4(n) &=& \overline{p}_e(8n+5) - \overline{p}_o(8n+5) \\
&=& M2_e(8n+5) - M2_o(8n+5). \nonumber
\end{eqnarray}
\end{corollary}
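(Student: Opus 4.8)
The goal is to prove Corollary \ref{cor3}, which relates the rank differences $\overline{p}_e(8n+5)-\overline{p}_o(8n+5)$ and $M2_e(8n+5)-M2_o(8n+5)$ to the number of $4$-cores $C_4(n)$ when $8n+5$ is square-free.

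\medskip

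The plan is to combine Corollary \ref{formula1} with the result of Ono and Sze \cite{On-Sz1} linking $4$-cores to Hurwitz class numbers. First, since $8n+5$ is odd and $8n+5\equiv 5\pmod 8$, in particular $8n+5\equiv 1\pmod 4$, so the first case of both \eqref{formula1eq} and \eqref{formula2eq} applies: $(-1)^{8n+5}\overline{\alpha}(8n+5) = -4H(4(8n+5))$ and $\overline{\alpha_2}(8n+5) = 4H(4(8n+5))$. Since $8n+5$ is odd, $(-1)^{8n+5} = -1$, so both rank differences equal $4H(32n+20)$; this immediately yields the second equality $\overline{p}_e(8n+5)-\overline{p}_o(8n+5) = M2_e(8n+5)-M2_o(8n+5)$. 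It remains to show $8C_4(n) = 4H(32n+20)$, i.e. $2C_4(n) = H(32n+20)$, under the square-freeness hypothesis.

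\medskip

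For this last identity I would invoke \cite{On-Sz1}, where Ono and Sze express $C_4(n)$ in terms of class numbers of discriminant $-(8n+5)$ (or a related discriminant). The precise statement there is that $C_4(n)$ equals a Hurwitz-type class number up to an explicit constant, and when $8n+5$ is square-free the relevant class number $h(-(8n+5))$ coincides with a Hurwitz class number $H(8n+5)$ up to normalization. One then needs the standard relation between $H(m)$ and $H(4m)$: for $m\equiv 3\pmod 4$ we have $H(4m) = 2H(m)$ unless small obstructions occur (and for square-free $m\equiv 5\pmod 8$ one computes $H(4m)$ from genus theory / the class number formula). Chaining $2C_4(n) = 2\cdot(\text{const})\cdot h(-(8n+5)) = (\text{const}')\cdot H(8n+5) = H(32n+20)$ gives the result; the constants work out precisely because square-freeness removes all the exceptional contributions in the Hurwitz class number relations.

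\medskip

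The main obstacle I anticipate is bookkeeping the normalization constants and the $2$-adic behavior of the class number when passing from $H(8n+5)$ to $H(4(8n+5)) = H(32n+20)$: the relation between $H(m)$ and $H(4m)$ depends on $m \bmod 8$ and on whether $-m$ or $-4m$ is a fundamental discriminant, so one must verify carefully that for $m = 8n+5$ square-free the factor is exactly the one needed to match $8C_4(n)$. This is a finite case check using the explicit formula \eqref{rofn} for $r(n)$ together with $r(32n+20) = r(8n+5)$ (since $32n+20 = 4(8n+5)$) and the identification of $r$-values with class numbers in the $\equiv 1,2\pmod 4$ case. Once the normalization is pinned down, the corollary follows immediately from Corollary \ref{formula1} and \cite{On-Sz1}.
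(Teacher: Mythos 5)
Your first step is correct and is exactly what the paper does: since $8n+5\equiv 1\pmod 4$, the first cases of \eqref{formula1eq} and \eqref{formula2eq} give
\begin{equation*}
\overline{p}_e(8n+5)-\overline{p}_o(8n+5) \;=\; M2_e(8n+5)-M2_o(8n+5) \;=\; 4H(32n+20),
\end{equation*}
which settles the second equality of \eqref{cor3eq}. The problem is in how you propose to finish. Your chain $2C_4(n)=2\cdot(\text{const})\cdot h(-(8n+5))=(\text{const}')\cdot H(8n+5)=H(32n+20)$ cannot work as written: $-(8n+5)\equiv 3\pmod 4$ is not the discriminant of any quadratic order, and $H(8n+5)=0$ by definition since $8n+5\equiv 1\pmod 4$ (the Hurwitz class number is supported on $n\equiv 0,3\pmod 4$). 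Likewise the relation ``$H(4m)=2H(m)$ for $m\equiv 3\pmod 4$'' is irrelevant here because $8n+5\equiv 1\pmod 4$. So the normalization step you flag as the ``main obstacle'' is not just bookkeeping you deferred; the particular route you sketch through $H(8n+5)$ is a dead end.

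The actual finish is shorter and requires no $H(m)$--versus--$H(4m)$ comparison. Ono and Sze prove directly that $C_4(n)=\tfrac12 h(-32n-20)$ when $8n+5$ is square-free; the discriminant appearing there is already $-(32n+20)$, not $-(8n+5)$. Since $8n+5$ is square-free and $\equiv 1\pmod 4$, the number $-(32n+20)=4\cdot\bigl(-(8n+5)\bigr)$ with $-(8n+5)\equiv 3\pmod 4$ is a fundamental discriminant, so Cohen's formula \eqref{Cohenformula} applies with $D=-(32n+20)$, $f=1$, and $w(D)=1$, giving $H(32n+20)=h(-32n-20)$. Hence
\begin{equation*}
\overline{p}_e(8n+5)-\overline{p}_o(8n+5)=4H(32n+20)=4h(-32n-20)=8C_4(n),
\end{equation*}
as claimed. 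This is precisely the paper's argument (Corollary \ref{formula1} plus \eqref{Cohenformula} plus the Ono--Sze identity), so your overall strategy is the right one; only the class-number identification at the end needs to be replaced by the direct computation above.
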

Thus all of the congruences and identities for $C_4(n)$ in
\cite{On-Sz1} apply to $\overline{p}_e(8n+5) -
\overline{p}_o(8n+5)$ and $M2_e(8n+5) - M2_o(8n+5)$ as well.

As a final application of Theorem \ref{main}, we use known
generating functions for $\overline{f}(q)$, $\overline{f}_2(q)$,
and class numbers to deduce $q$-series identities.  We give four
examples.
\begin{corollary} \label{cor4}
We have
\begin{itemize}
\item[($i$)]
\begin{equation} \label{cor4eq1}
\frac{4}{\Theta(z+1/2)}\sum_{n \in \mathbb{Z}}
\frac{(-1)^nq^{n^2+n}}{(1+q^{n})^2} - \frac{8}{\Theta(z)} \sum_{n
\geq 0} \frac{nq^{n^2}(1-q^{2n})}{(1+q^{2n})} = \Theta^3(z+1/2).
\end{equation}
\item[($ii$)]
\begin{equation} \label{cor4eq2}
\frac{4}{\Theta(z+1/2)}\sum_{n
\in \mathbb{Z}} \frac{(-1)^nq^{n^2+2n}}{(1+q^{2n})^2} +
4\prod_{n=1}^{\infty} \frac{(1-q^{4n-2})}{(1-q^{4n})} \sum_{n \geq 0}
\frac{q^{n^2+3n+1}}{(1-q^{2n+1})^2} = \Theta^3(z).
\end{equation}
\item[$(iii)$]
\begin{equation} \label{cor4eq3}
\frac{4}{\Theta(z)} \sum_{n \in \mathbb{Z}}
\frac{(-1)^nq^{n^2+n}}{(1+(-q)^{n})^2} - \frac{8}{\Theta(z+1/2)}
\sum_{n \in \mathbb{Z}} \frac{(-1)^nq^{n^2+2n}}{(1+q^{2n})^2} =
-\Theta^3(z)
\end{equation}
\item[$(iv)$]
\begin{equation} \label{cor4eq4}
\frac{1}{\Theta(z)} \sum_{n \in \mathbb{Z}}
\frac{(-1)^nq^{n^2+n}}{(1+(-q)^{n})^2} + \frac{1}{\Theta(z+1/2)}
\sum_{n \in \mathbb{Z}} \frac{(-1)^nq^{n^2+2n}}{(1+q^{2n})^2} =
-6\mathcal{H}(q).
\end{equation}
\end{itemize}
\end{corollary}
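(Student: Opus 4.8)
The plan is to derive all four identities from Theorem~\ref{main} by substituting known $q$–series representations for $\overline{f}(q)$, $\overline{f}_2(q)$ and $\mathcal{H}(q)$ and then rearranging. The inputs I will use are: (a) the Appell--Lerch forms of the rank and $M2$–rank difference generating functions,
\[
\overline{f}(q)=\frac{4}{\Theta(z+\tfrac12)}\sum_{n\in\Z}\frac{(-1)^nq^{n^2+n}}{(1+q^n)^2},\qquad
\overline{f}_2(q)=\frac{4}{\Theta(z+\tfrac12)}\sum_{n\in\Z}\frac{(-1)^nq^{n^2+2n}}{(1+q^{2n})^2},
\]
obtained by setting $z=-1$ in the two–variable generating functions of \cite{Br-Lo1,Lo2} (the squared denominators arising from the double pole at $z=-1$); (b) the elementary substitution rules $\Theta(z+\tfrac12)\big|_{q\mapsto-q}=\Theta(z)$, $\Theta(z)\big|_{q\mapsto-q}=\Theta(z+\tfrac12)$, $\Theta^3(z)\big|_{q\mapsto-q}=\Theta^3(z+\tfrac12)$, and $(-q)^{n^2+n}=q^{n^2+n}$; and (c) two classical Lambert/Hecke–type evaluations of the Hurwitz class number series (of Kronecker--Hurwitz type), namely
\[
\prod_{n\geq1}\frac{1-q^{4n-2}}{1-q^{4n}}\ \sum_{n\geq0}\frac{q^{n^2+3n+1}}{(1-q^{2n+1})^2}=2\,\mathcal{H}(q)+\tfrac16\,\Theta^3(z),
\]
\[
8\sum_{n\geq0}\frac{nq^{n^2}(1-q^{2n})}{1+q^{2n}}=-\Theta(z)\Bigl(16\,\mathcal{H}(-q)+\tfrac43\,\Theta^3(z+\tfrac12)\Bigr).
\]

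Granting (a)--(c), the four identities reduce to algebra in the ring of $q$–series. Applying Theorem~\ref{main}(i) with $q\mapsto-q$ together with Theorem~\ref{main}(ii) gives the two clean linear relations
\[
\overline{f}(-q)-2\,\overline{f}_2(q)=-\Theta^3(z),\qquad \overline{f}(-q)+\overline{f}_2(q)=-24\,\mathcal{H}(q);
\]
rewriting the left-hand sides by (a) and (b) yields (iii) and, after dividing by $4$, (iv). For (i) I read the assertion as $\overline{f}(q)-\tfrac{8}{\Theta(z)}\sum_{n\geq0}\tfrac{nq^{n^2}(1-q^{2n})}{1+q^{2n}}=\Theta^3(z+\tfrac12)$, use (a) to recognise the first term as $\overline{f}(q)$, replace $\overline{f}(q)$ by $-16\,\mathcal{H}(-q)-\tfrac13\Theta^3(z+\tfrac12)$ via Theorem~\ref{main}(i) with $q\mapsto-q$, multiply through by $\Theta(z)$, and appeal to the second formula in (c). For (ii) I proceed symmetrically: use (a) to identify the first term as $\overline{f}_2(q)$, replace it by $-8\,\mathcal{H}(q)+\tfrac13\Theta^3(z)$ via Theorem~\ref{main}(ii), and invoke the first formula in (c).

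The genuinely substantive steps are the inputs (a) and (c); everything else is routine rearrangement. For (a) one must correctly pass to the limit $z\to-1$ in the rank and $M2$–rank generating functions, where double poles produce the squared denominators and a constant term must be tracked; the outcome can be checked against the low–order expansions $\overline{f}(q)=1+2q-4q^2+\cdots$ and $\overline{f}_2(q)=1+2q+4q^2+\cdots$ recorded in the introduction. For (c) one needs the classical relation expressing $\Theta(z)$ times the (mock modular) class number generating function as an honest Lambert series, together with a short amount of $q$–series manipulation --- partial fractions and the reindexing $n\mapsto n+k$ turn $\sum_{n\geq0}q^{n^2+3n+1}(1-q^{2n+1})^{-2}$ into a double sum --- to cast it in exactly the shape appearing in (i) and (ii). This last bookkeeping, combined with keeping the $q\mapsto-q$ substitutions straight, is where slips are most likely.
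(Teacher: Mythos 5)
Your proposal is correct and follows essentially the same route as the paper: all four identities are obtained from Theorem \ref{main} together with the Appell--Lerch representations \eqref{fofq} and \eqref{M2ofq} (your input (a)) and the classical Kronecker/Mordell class-number generating functions (your input (c)), with (iii) and (iv) coming from the linear combinations $\overline{f}(-q)-2\overline{f}_2(q)=-\Theta^3(z)$ and $\overline{f}(-q)+\overline{f}_2(q)=-24\mathcal{H}(q)$. The only cosmetic difference is that the paper phrases the classical inputs via the Kronecker--Hurwitz class numbers $F(n)$, $G(n)$ with $H(n)=G(n)-F(n)$ and $r(n)=24F(n)-12G(n)$, whereas you pre-translate those same identities into combinations of $\mathcal{H}(q)$ and $\Theta^3$.
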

One may view the first three equations above as analogous to the
mock theta conjectures of Ramanujan \cite{Hi1} in the sense that a
linear combination of two non-modular functions becomes modular.


\section{weak Maass forms and the proof of Theorem \ref{main}}\label{ProofSection}
Here we prove Theorem \ref{main}.
Let us first recall the notion of a  weak Maass form.
If $k\in \frac{1}{2}\Z\setminus
\Z$,
$z=x+iy$ with $x, y\in \R$, then the weight $k$ hyperbolic
Laplacian is given by
\begin{equation}\label{laplacian}
\Delta_k := -y^2\left( \frac{\partial^2}{\partial x^2} +
\frac{\partial^2}{\partial y^2}\right) + iky\left(
\frac{\partial}{\partial x}+i \frac{\partial}{\partial y}\right).
\end{equation}
If $v$ is odd, then define $\epsilon_v$ by
\begin{equation}
\epsilon_v:=\begin{cases} 1 \ \ \ \ &{\text {\rm if}}\ v\equiv
1\pmod 4,\\
i \ \ \ \ &{\text {\rm if}}\ v\equiv 3\pmod 4. \end{cases}
\end{equation}
Moreover we let $\chi$ be a Dirichlet character.
 A {\it (harmonic) weak Maass form of weight $k$ with Nebentypus $\chi$ on a subgroup
$\Gamma \subset \Gamma_0(4)$} is any smooth function $g:\H\to \C$
satisfying the following:
\begin{enumerate}
\item For all $A= \left(\begin{smallmatrix}a&b\\c&d
\end{smallmatrix} \right)\in \Gamma$ and all $z\in \H$, we
have
\begin{displaymath}
g(Az)= \leg{c}{d}\epsilon_d^{-2k} \chi(d)\,(cz+d)^{k}\ g(z).
\end{displaymath}
\item We  have that $\Delta_k g=0$.
\item The function $g(z)$ has
at most linear exponential growth at all the cusps of $\Gamma$.
\end{enumerate}
Recall that in  \cite{Br-Lo1} the authors related $\overline{f}(q)$ to a weak Maass form.
To be more precise, define the function
\begin{eqnarray*}
  \overline{\mathcal{M}}(z):= \overline{f}(q)- \overline{\mathcal{N}}(z).
  \end{eqnarray*}
  with
  \begin{eqnarray*}
  \overline{\mathcal{N}}(z):= \frac{\sqrt{2}}{\pi i} \int_{-\bar z}^{i \infty}
   \frac{\Theta(\tau+ \frac12)}{(-i (\tau+z))^{\frac32}}
  \, d \tau.
  \end{eqnarray*}
Then the function $\overline{\mathcal{M}}(z)$ is a weak Maass form
of weight $\frac32$ on $\Gamma_0(16)$. We have a similar result
for the function $\overline{f}_2(q)$.  For this we let
$$
\overline{\mathcal{M}}_2(z):= \overline{f}_2(q) - \overline{\mathcal{N}   }_2(z),
$$
where
$$
\overline{\mathcal{N}}_2(z) :=  -\frac{i}{\sqrt{2} \pi} \int_{- \bar z}^{i\infty}
\frac{\Theta(\tau)}{\left(- i (\tau+z) \right)^{\frac32}}\, d\tau.
$$
We show
\begin{theorem} \label{MaassTheorem}
The function $\overline{\mathcal{M}}_2(z)$ is a weak Maass form of weight $\frac32$ on
$\Gamma_0(16)$.
\end{theorem}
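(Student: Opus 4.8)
The plan is to follow, essentially verbatim, the argument that identifies $\overline{\mathcal{M}}(z)$ as a weak Maass form in \cite{Br-Lo1}. The first step is to trade the combinatorial definition of $\overline{f}_2(q)$ for an analytic one. The two-variable generating function for the $M2$-rank of overpartitions is known in closed form (see \cite{Lo2}, building on \cite{Be-Ga1}); specializing the rank variable to $-1$ produces $\sum_\lambda(-1)^{M2\text{-rank}(\lambda)}q^{|\lambda|}=\overline{f}_2(q)$, and a standard $q$-series manipulation (of Watson--Fine type) then rewrites $\overline{f}_2(q)$ as $\Theta(z+\tfrac12)^{-1}$ times a level-two Appell--Lerch series, namely a constant multiple of $\sum_{n\in\Z}\frac{(-1)^n q^{n^2+2n}}{(1+q^{2n})^2}$ — the series occurring in \eqref{cor4eq2}. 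The squared denominator records that this series is an appropriate derivative and specialization of a Zwegers $\mu$-function, which is exactly the structure needed to invoke Zwegers' machinery.

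Second, I would apply the modular completion of Appell--Lerch series from \cite{Zw}: adjoining to $\mu$ its nonholomorphic companion built from the error function $R$, performing the same derivative and specialization as in step one, and multiplying by the theta quotient produced there, one finds that the nonholomorphic term needed to modularize $\overline{f}_2(q)$ is a period integral of $\Theta$. The crucial bookkeeping is to check that, after rewriting $R$ via the incomplete gamma function in the usual way, this period integral matches
$$
\overline{\mathcal{N}}_2(z)=-\frac{i}{\sqrt2\,\pi}\int_{-\bar z}^{i\infty}\frac{\Theta(\tau)}{(-i(\tau+z))^{3/2}}\,d\tau
$$
exactly — the constant $-i/(\sqrt2\,\pi)$, the branch of $(-i(\tau+z))^{3/2}$, the base point $-\bar z$, and the precise internal argument of $\Theta$. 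This identifies $\overline{\mathcal{M}}_2(z)=\overline{f}_2(q)-\overline{\mathcal{N}}_2(z)$ as the genuine modular completion.

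With this representation in hand I would verify the three defining properties of a weak Maass form of weight $\tfrac32$. The equation $\Delta_{3/2}\overline{\mathcal{M}}_2=0$ holds because $\overline{f}_2(q)$ is annihilated trivially, while $\Delta_{3/2}$ kills the Eichler integral $\int_{-\bar z}^{i\infty}\Theta(\tau)\,(-i(\tau+z))^{-3/2}\,d\tau$ of the weight $2-\tfrac32=\tfrac12$ form $\Theta$ — the same direct computation as in \cite{Br-Fu1,Br-Lo1}. For the transformation law $(1)$ of the definition with $k=\tfrac32$ on $\Gamma_0(16)$, I would combine the modularity of the Appell--Lerch/theta-quotient piece — whose level is forced to be $16$ by the $q^{2n}$ and the shift $z\mapsto z+\tfrac12$ — with the transformation of the Eichler integral of $\Theta$ under $\Gamma_0(4)$, and then check invariance on a generating set of $\Gamma_0(16)$: the translation $z\mapsto z+1$ is immediate from the $q$-expansion, and for the remaining generators the period-polynomial defect of the Appell--Lerch sum cancels against that of $\overline{\mathcal{N}}_2(z)$, exactly as in \cite{Br-Lo1}, keeping track of the theta multiplier $\leg{c}{d}\epsilon_d^{-3}$ throughout. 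Finally, the growth condition holds because $\overline{\alpha}_2(n)$ is subexponentially bounded (a difference of overpartition counts), so $\overline{f}_2(q)$ has at most linear exponential growth at $i\infty$ and $\overline{\mathcal{N}}_2(z)$ is bounded there, with the other cusps handled by the transformation law just established.

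I expect the main obstacle to be the second step: correctly determining which $\mu$-function, which derivative, and which specialization reproduce $\overline{f}_2(q)$, and then confirming that the resulting nonholomorphic term coincides with $\overline{\mathcal{N}}_2(z)$ with the right constant and the right internal argument of $\Theta$ (note that $\overline{\mathcal{N}}(z)$ in \cite{Br-Lo1} uses $\Theta(\tau+\tfrac12)$ whereas $\overline{\mathcal{N}}_2(z)$ uses $\Theta(\tau)$). Everything downstream — the level $16$, the precise multiplier, and ultimately the identity of Theorem \ref{main} relating $\overline{f}_2(q)$ to $\mathcal{H}(q)$ and $\Theta^3(z)$ — rests on getting this normalization right; by comparison, the transformation check on $\Gamma_0(16)$, though lengthy, is routine once a generating set is fixed.
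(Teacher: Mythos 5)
Your plan is correct in outline and lands on the same skeleton as the paper's argument: both start from the representation \eqref{M2ofq} of $\overline{f}_2(q)$ as a theta quotient times a Lerch-type sum with squared denominator, realize that sum as $\frac{1}{2\pi i}\frac{\partial}{\partial r}$ of a one-parameter deformation (your ``derivative and specialization of a $\mu$-function'' is exactly the paper's $\overline{M}_{2,r}$), and complete $\overline{f}_2$ by a period integral of $\Theta$ that is checked to coincide with $\overline{\mathcal{N}}_2(z)$. Where you genuinely diverge is in how the transformation law of that deformed sum is obtained: you propose to import Zwegers' completed $\widetilde{\mu}$ with its nonholomorphic $R$-correction as a black box from \cite{Zw}, whereas the paper (following \cite{Br-On1,Br-Lo1}) proves the inversion formula for $\overline{M}_{2,r}$ directly (Lemma \ref{HilfsFunktion}), with the error appearing as a Mordell integral $I_r(z)$ that is then converted into the theta integral by Lemma \ref{ThetaLemma} and differentiated in $r$. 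Your route shifts all the analysis onto \cite{Zw} and reduces the problem to the bookkeeping you correctly identify as the crux (which specialization and derivative of $\mu$ reproduces \eqref{M2ofq}, and whether the resulting correction term has the constant $-i/(\sqrt{2}\pi)$ and the argument $\Theta(\tau)$ rather than $\Theta(\tau+\tfrac12)$); the paper's route is longer but produces the Mordell integral explicitly, which is what makes the match with $\overline{\mathcal{N}}_2(z)$ and the subsequent comparison with the nonholomorphic part of $\mathcal{F}(z)$ transparent. Both are viable, and your remaining steps (annihilation by $\Delta_{3/2}$ of the Eichler integral, checking the multiplier on generators of $\Gamma_0(16)$, and the growth condition) agree with how the paper says one ``finishes as in \cite{Br-On1}.''
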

\begin{proof}[Idea of proof of Theorem \ref{MaassTheorem}]
The proof is quite similar to the case of $\mathcal{M}(z)$ treated
in \cite{Br-Lo1}, so we just give an idea of the proof here.  We
first note that
\begin{equation} \label{M2ofq}
\overline{f}_2(q) = \frac{4\eta(2z)}{\eta^2(z)} \sum_{n \in
\mathbb{Z}} \frac{(-1)^nq^{n^2+2n}}{(1+q^{2n})^2},
\end{equation}
where
$$
\eta(z) := q^{1/24}\prod_{n=1}^{\infty} (1-q^n)
$$
is the usual $\eta$ function.   Next define the function
\begin{equation*}
\overline{M}_{2,r}(z)=
\overline{M}_{2,r}(q) := \frac{4 \eta(2z)}{\eta^2(z)}
\sum_{n \in  \Z}
\frac{(-1)^{n+1}\, q^{n^2}}{1+ e^{2 \pi i r}\, q^{2n}}
\end{equation*}
This function is related to $\overline{f}_2(q)$ via
\begin{equation*}
\overline{f}_2(q) = \frac{1}{2 \pi i } \frac{\partial}{\partial r} \big(\,  \overline{M}_{2,r}(q)\big).
\end{equation*}
One first determines a transformation law of  $\overline{M}_{2,r}(q)$ under inversion.
\begin{lemma} \label{HilfsFunktion}
We have
\begin{equation*}
\overline{M}_{2,r}\left(-\frac{1}{z} \right)
= - \sqrt{2} i (-i z)^{ \frac12}  \overline{O}_{2,-irz}(z)
- 4 \sqrt{2} (-iz)^{-\frac12} I_r(z),
\end{equation*}
where
\begin{eqnarray*}
\overline{O}_{2,r}(z)&:=& \frac{\eta(4z)}{\eta^2(8z)}
\sum_{\substack{   m \in \Z\\ m \text{ odd}  } }
\frac{q^{\frac{m^2}{2} }}{1-e^{\pi ir }\, q^{2m}},\\
I_r(z) &:=& \int_{\R} \frac{e^{- \frac{2 \pi i x^2}{z} }}{1-e^{2 \pi i r- \frac{4 \pi i x}{z}} }\, dx.
\end{eqnarray*}
\end{lemma}
Now let
\begin{equation*}
I(z):=\frac{1}{2 \pi i} \frac{\partial}{\partial r}\left( I_r(z)\right).
\end{equation*}
We relate $I(z)$ to a theta integral.
\begin{lemma} \label{ThetaLemma}
We have
\begin{equation*}
I(z) =\frac{z^2}{8 \sqrt{2}\pi} \int_{0}^{\infty}
\frac{\Theta \left( \frac{iu}{4}\right)}{   \left( -i \left(iu +z \right)\right)^{ \frac32}   }\, du.
\end{equation*}
\end{lemma}
Differentiating Lemma \ref{HilfsFunktion} and using Lemma \ref{ThetaLemma} yields
\begin{lemma}
We have
\begin{equation*}
\overline{f}_2\left(-\frac{1}{z} \right)
= -\frac{(-iz)^{\frac32 }}{\sqrt{2}} \overline{O}_2\left(\frac{z}{8} \right)
+ \frac{(-iz)^{\frac32 }}{4} \int_{0}^{\infty}
\frac{\Theta \left(\frac{iu}{4} \right)}{\left( - i (iu+z)\right)}\, du,
\end{equation*}
where
\begin{equation*}
\overline{O}_2(z) := \frac{\eta(4z)}{\eta^2(8z)}
\sum_{ \substack{m \in \Z\\ m \text{ odd} }}
\frac{q^{  \frac{m^2}{2}+2m  }}{\left( 1-q^{2m}\right)^2}
\end{equation*}
\end{lemma}
It is not hard to see that $\mathcal{N}_2(z)$ introduces the same error integral as $\overline{f}_2(q)$ under inversion.
Now one can finish the proof as in \cite{Br-On1}.
\end{proof}

We next turn to the proof of Theorem \ref{main}. For this define
the function
\begin{equation*}
\overline{g}(z): = \overline{f}(-q) + 16 \mathcal{H}(z).
\end{equation*}
We must show that $\overline{g}(z)=-\frac13 \Theta^3(q)$.

We start by showing that $\overline{g}(z)$ is a weakly holomorphic
modular form on $\Gamma_0(16)$  with trivial Nebentypus character.
For this we observe that $\overline{\mathcal{M}} \left(z+ \frac12
\right)$ is a weak Maass form of weight $\frac32$ on
$\Gamma_0(16)$ whose holomorphic part is given by
$\overline{f}(-q)$. Thus $\overline{g}(z)$ is the holomorphic part
of a weak Maass form. We now check that the associated weak Maass
form doesn't have a non-holomorphic part.
Indeed, the non-holomorphic part of
$\overline{\mathcal{M}}\left(z+\frac12 \right)$ is given by
\begin{equation} \label{thetint}
\frac{-\sqrt{2}}{\pi i} \int_{- \bar z}^{i \infty}
\frac{\Theta(\tau)}{\left( -i \left(\tau+z
\right)\right)^{\frac32}}\, d \tau.
\end{equation}
This agrees with the non-holomorphic part of $-16 \mathcal{H}(z)$.

We next claim that $\overline{g}(z)$ is actually a holomorphic
modular form.  It is known (see \cite[Prop 1.2.4]{DS}, for
example) that if the coefficients of a weakly holomorphic modular
form grow at most polynomially, then it is holomorphic.  To take
advantage of this fact in the case of $\overline{g}(z)$, we
multiply by $\Theta(z)$. Then we have
\begin{equation} \label{weight2}
\overline{g}(z)\Theta(z) = 4\sum_{n \in \mathbb{Z}}
\frac{(-1)^nq^{n^2+n}}{(1+(-q)^n)^2} + 16\mathcal{H}(z)\Theta(z),
\end{equation}
the first term on the right hand side following from the
generating function \cite{Br-Lo1}
\begin{equation} \label{fofq}
\overline{f}(q) = \frac{4}{\Theta(z+1/2)}\sum_{n \in \mathbb{Z}}
\frac{(-1)^nq^{n^2+n}}{(1+q^n)^2}.
\end{equation}
Now the coefficients in this first term of \eqref{weight2} can be
expressed in terms of sum-of-divisor functions, and hence grow at
most polynomially.  It is well-known that the Hurwitz class
numbers grow polynomially, thus so do the coefficients in the
product of $\mathcal{H}(z)$ and $\Theta(z)$.

Hence we have that $\overline{g}(z)\Theta(z)$ is a holomorphic
modular form of weight $2$ and level $\Gamma_0(16)$.  To show that
it is equal to $\frac{-1}{3}\Theta^4(z)$ we compute that the
$q$-expansions agree up to $q^4$.  This then completes the proof
of part $(i)$ of Theorem \ref{main}.

The proof of part $(ii)$ is essentially the same.  There we use
Theorem \ref{MaassTheorem} to cancel the non-holomorphic parts of
$\mathcal{\overline{M}}_2(z)$ and $\mathcal{F}(z)$, and for the
polynomial growth we appeal to the generating function for the
$M2$-rank  (\ref{M2ofq}).

\section{Proof of the corollaries}
\begin{proof}[Proof of Corollary \ref{cor1} and \ref{cor2}]
These are simple calculations using Corollary \ref{formula1}, equation \eqref{rofn},
and the following fact \cite[p.273]{Co.5}:  If $-n = Df^2$ where $D$ is a fundamental discriminant,
then
\begin{equation} \label{Cohenformula}
H(n) = \frac{h(D)}{w(D)} \sum_{d \mid f} \mu(d) \left( \frac{D}{d}
\right) \sigma_1(f/d).
\end{equation}
Here $h(D)$ is the class number of $\mathbb{Q}(\sqrt{D})$, $w(D)$
is half the number of units in the ring of integers of
$\mathbb{Q}(\sqrt{D})$, $\sigma_1(n)$ is the sum of the divisors
of $n$, and $\mu(n)$ is the M\"obius function.
\end{proof}

\begin{proof}[Proof of Corollary \ref{cor3}.]
It is shown in \cite{On-Sz1} that if $8n+5$ is square-free, then
$$C_4(n) = \frac12 h(-32n-20).$$
Invoking
\eqref{Cohenformula} shows that in this case $\overline{p}_e(8n+5)
- \overline{p}_o(8n+5)$ is also equal to $\frac12 h(-32n-20)$, thus
establishing \eqref{cor3eq}.
\end{proof}

\begin{proof}[Proof of Corollary \ref{cor4}]
Kronecker \cite{Kr1}, Mordell \cite{Mo1}, and others have given
nice generating functions involving certain class numbers $F(n)$
and $G(n)$.  The definitions of these class numbers are not
important here, only that $H(n) = G(n) - F(n)$ and
$r(n) = 24F(n) - 12G(n)$.  Thus we may recast the
main theorems in terms of these class numbers,
$$
\overline{f}(q) = -16\sum_{n \geq 0} F(n)(-q)^n  + \Theta^3(z+1/2)
$$
and
$$
\overline{f}_2(q) = -8\sum_{n \geq 0} F(n)q^n + \Theta^3(z).
$$
Then, \eqref{cor4eq1} follows from \eqref{fofq} and Mordell's
generating function for $F(n)$ \cite{Mo1}, while \eqref{cor4eq2}
follows from  \eqref{M2ofq} and Kronecker's generating function for
$F(n)$ \cite[Eq. (XI)]{Kr1}.  Equations \eqref{cor4eq3} and
\eqref{cor4eq4} follows from \eqref{fofq} and  \eqref{M2ofq}.
\end{proof}


\end{document}